\newtheorem{lemma}{Lemma}
\newcommand{\rr}{\mathbf{r}}
\newcommand{\nn}{\mathbf{n}}
\newcommand{\dd}{\mathbf{d}}
\newcommand{\yy}{\mathbf{y}}
\newcommand{\cS}{\mathcal{S}}
\newcommand{\cD}{\mathcal{D}}
\newcommand{\cT}{\mathcal{T}}
\newcommand{\cF}{\mathcal{F}}
\renewcommand{\d}{\mathrm{d}}
\newcommand{\pd}{\partial}
\newcommand{\bs}{\boldsymbol}
\title{Robust fast direct integral equation solver for three-dimensional quasi-periodic scattering problems with a large number of layers}
\author{Bowei Wu\thanks{Department of Mathematical Sciences, UMass Lowell, Lowell, MA 01854. email: bowei\_wu@uml.edu}
and
Min Hyung Cho\thanks{Department of Mathematical Sciences, UMass Lowell, Lowell, MA 01854. email: minhyung\_cho@uml.edu}}
\begin{document}

\maketitle

\begin{abstract}
    A boundary integral equation method for the 3-D Helmholtz equation in multilayered media with many quasi-periodic layers is presented. Compared with conventional quasi-periodic Green's function method, the new method is robust at all scattering parameters. A periodizing scheme is used to decompose the solution into near- and far-field contributions. The near-field contribution uses the free-space Green's function in an integral equation on the interface in the unit cell and its immediate eight neighbors; the far-field contribution uses proxy point sources that enclose the unit cell. A specialized high-order quadrature is developed to discretize the underlying surface integral operators to keep the number of unknowns per layer small. We achieve overall linear computational complexity in the number of layers by reducing the linear system into block tridiagonal form and then solving the system directly via block LU decomposition.
The new solver is capable of handling a 100-interface structure with 961.3k unknowns to $10^{-5}$ accuracy in less than 2 hours on a desktop workstation.
\end{abstract}

\section{Introduction}\label{sec:intro}
Optical or electromagnetic waves in doubly-periodic multilayered media is one of the fundamental mechanisms in many modern high-tech devices such as dielectric gratings for high-powered laser\cite{perry1995high,barty2004overview}, thin-film photovoltaic \cite{atwater2011plasmonics,kelzenberg2010enhanced}, passive cooling devices using multilayer photonic structure \cite{raman2014passive}, photonic crystals \cite{jobook}, semiconductor packaging that is one of the hot topics in chip design \cite{deboi2022improved,kim2022emi}, and process control in semiconductor lithography \cite{model2008scatterometry} . Numerical simulations are often used to help design or optimize these devices where one must solve the scattering problem for various incident angles and/or wavelengths and repeat the computation for design optimization in many cases \cite{tsantili2018computational}. Therefore, it is imperative to have a robust and efficient solver. There are many well-known numerical methods, including finite-difference time-domain method \cite{taflove2005computational}, finite element method \cite{bao1995finite,monk2003finite,he2016spectral}, rigorous-coupled wave analysis or Fourier modal method \cite{moharam1981rigorous,li1996use,cho2008rigorous}, and integral equation method \cite{bruno2016windowed,bruno2017windowed,PerezArancibia2019DomainDF,nicholls2020sweeping,cho2012parallel,chen2018accurate,lai2014fast,cho2015robust,zhang2021fast,zhang2022fast,cho2019spectrally,boag1992analysis}. Each method has its own advantages and disadvantages. The integral equation method stands out with several very attractive benefits over other methods: the dimensionality of the problem is reduced with all the unknowns residing on the interfaces instead of in the volume, which significantly reduces the number of unknowns; the radiation condition is built into the Green's function and no artificial boundary conditions or perfectly matched layers are required; moreover, a problem can often be formulated as a Fredholm second-kind integral equation which is well-conditioned and suitable for an accelerated iterative matrix solver. However, due to the nature of the Green's function, discretization of the integral equation usually yields a dense matrix that is expensive to invert directly. Thus, large system solvers must be accelerated using fast linear algebra algorithms, such as the Fast Multipole Method \cite{greengard1987fast,rokhlin1990rapid,cho2010wideband} and the Fast Direct Solver \cite{hackbusch1999sparse,martinsson2005fast,greengard2009fast,borm2010efficient,xia2010fast}.

For two dimensional (2-D) quasi-periodic multilayered media, the boundary integral equation method combined with a periodizing scheme is used to build an efficient solver that can handle 1000s of layers \cite{cho2015robust}. The solver is further accelerated by the fast direct solver recently developed by Zhang and Gillman \cite{zhang2021fast,zhang2022fast} which can handle complex interfaces that require a large number of samples and is useful for parameter optimization. For three-dimensional (3-D) problems, to avoid challenges of surface integral quadrature, the Method of fundamental solution (MFS) has been used in the place of boundary integral equation for quasi-periodic multilayered media \cite{cho2019spectrally,boag1992analysis} and for doubly-periodic arrays of axisymmetric objects \cite{liu2016efficient}. However, the MFS approach has many limitations associated with the choice of artificial source points near the boundary and the ill- conditioning of the linear system, making it impractical as a general solver even though it was a good tool to show the effectiveness of the periodizing scheme in three dimensions. The difficulty of singular quadrature on layer interfaces has finally been overcome thanks to the recent development of the corrected trapezoidal quadrature \cite{wu2021zeta,wu2021corrected,wu2022unified}. This paper presents a robust and fast integral equation solver for the Helmholtz equation in 3-D quasi-periodic multilayered media using the direct solver based on Schur complement and block tridiagonal LU decomposition that was used for the 2-D problem \cite{cho2015robust}. One review paper \cite{kleemann2015fast} named the authors as one of the groups that can efficiently solve 3-D problems using the boundary integral equation method. Similar to the 2-D solver, the new 3-D solver's CPU time grows linearly with number of layers and is robust at Wood anomalies \cite{wood1902xlii}, making it possible to handle a large number of layers. This solver can be made highly efficient for optimization problems once it is accelerated by a fast direct solver that is a 3-D extension of \cite{zhang2022fast}.

\begin{figure}[t]
\includegraphics[width=\textwidth]{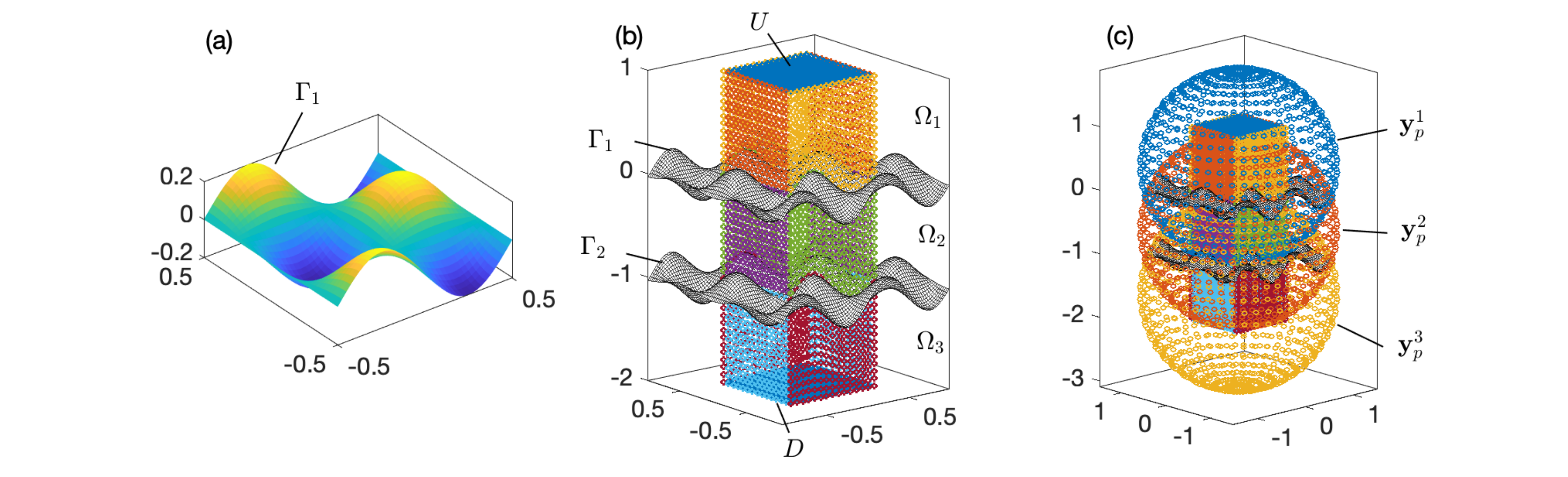}
\caption{(a) An interface parameterized by $g(x,y)=0.2\sin(2\pi x)\cos(2\pi y)$, with period $d=1$ in both directions. (b) Structure with multiple interfaces $\Gamma_i$ with the surrounding walls for each layer $\Omega_i$. (c) Proxy points $\{\mathbf{y}^i_p\}$ on spheres enclosing each layer $\Omega_i$.}\label{fig:periodization}
\end{figure}

The geometry of the problems (See Fig.~\ref{fig:periodization} for notation and schematics) consists of $I+1$ layers denoted by $\{\Omega_i\}_{i=1}^{I+1}$. There are $I$ surfaces $\{\Gamma_i\}_{i=1}^I$ where $\Gamma_i$ is the interface that separates $\Omega_i$ and $\Omega_{i+1}$. All interfaces have the same periodicity $d$ along $x$- and $y$-directions (see Fig.~\ref{fig:periodization}(b)). The wavenumber is $\{k_i\}_{i=1}^{I+1}$ in each layer. $L_i, R_i, F_i$, and  $B_i$ are the artificial side walls surrounding the unit cell of the layer $\Omega_i$ to impose quasi-periodic boundary conditions. $U$ and $D$ are, respectively, the artificial layers placed above $\Gamma_1$ at $z= z_u$ and below $\Gamma_I$ at $z=z_d$ to impose the radiation conditions. A plane wave is incident in the uppermost layer,
\begin{align}
u^{inc} (\mathbf{r})=\left\{\begin{array}{ll}e^{i\mathbf{k}\cdot \mathbf{r}}, & \mathbf{r} \in \Omega_1, \\0, & \mbox{otherwise,}\end{array}\right.
\end{align}
where the wave vector $\mathbf{k} =(k_{1x}, k_{1y}, k_{1z})= (k_1\sin{\phi^{inc}}\cos{\theta^{inc}}, k_1\sin{\phi^{inc}}\sin{\theta^{inc}}, k_1\cos{\phi^{inc}})$ with $0 \leq \theta^{inc} <  2\pi$ and $\pi/2 <\phi^{inc} < \pi$, and where $\mathbf{r} = (x,y,z)$. The incident wave is quasi-periodic (periodic up to a phase) in both directions, that is
\begin{align}
\alpha_x^{-1} u^{inc}(x+d, y, z) = \alpha_y^{-1}u^{inc}(x, y+d, z) = u^{inc}(x,y,z),
\end{align}
where the Bloch phases $\alpha_x$ and $\alpha_y$ are defined by
\begin{align}
\alpha_x = e^{idk_{1x}} \mbox{ and }\alpha_y = e^{idk_{1y}}.
\end{align}
From the standard scattering theory \cite{colton1998inverse}, the scattered wave $u_i$ must be quasi-periodic and satisfy the Helmholtz equation. Thus, the boundary value problem (BVP) for $u_i$ consists of the equation
\begin{align}
\Delta u_i(\mathbf{r}) + k_i^2 u_i(\mathbf{r}) = 0,~ \mathbf{r} \in \Omega_i \label{eq:helmholtz}
\end{align}
the continuity conditions at each interface $\Gamma_i$, $i=1,2,\cdots I$
\begin{align}
\begin{array}{llll}u_1-u_2 = -u^{inc} & \mbox{ and } & \frac{\partial u_1}{\partial \mathbf{n}}-\frac{\partial u_2}{\partial \mathbf{n}} = -\frac{\partial u^{inc}}{\partial \mathbf{n}} & \mbox{ on } \Gamma_1 \\u_i-u_{i+1} = 0 & \mbox{ and } &  \frac{\partial u_i}{\partial \mathbf{n}}-\frac{\partial u_{i+1}}{\partial \mathbf{n}} = 0 &  \mbox{ on } \Gamma_i, i=2,3,\cdots, I\end{array},\label{eq:continuity_cond}
\end{align}
the quasi-periodicity conditions on the side walls for all layers
\begin{align}
\begin{array}{lll}
u_i|_{L_i}=\alpha_x^{-1}u_i|_{R_i} & \mbox{and} & u_i|_{B_i}=\alpha_y^{-1}u_i|_{F_i} \\
\left. \frac{\partial u_i}{\partial \mathbf{n}}\right|_{L_i}=\alpha_x^{-1}\left.\frac{\partial u_i}{\partial \mathbf{n}}\right|_{R_i} & \mbox{and} & \left.\frac{\partial u_i}{\partial \mathbf{n}}\right|_{B_i}=\alpha_y^{-1}\left.\frac{\partial u_i}{\partial \mathbf{n}}\right|_{F_i}\end{array} \text{ for $i=1,2,\cdots I+1$},
\end{align}
and the radiation condition in $u_1$ and $u_{I+1}$
\begin{align}
&u(\mathbf{r}) = \sum_{m,n} a_{mn}^u e^{i(\kappa_x^m x+\kappa_y^n y+k_u^{(m,n)}(z-z_u))}, z \geq z_u\\
&u(\mathbf{r}) = \sum_{m,n} a_{mn}^d e^{i(\kappa_x^m x+\kappa_y^n y-k_d^{(m,n)}(z-z_d))}, z \leq z_d, \label{eq:radiation}
\end{align}
where $\kappa_x^m = k_{1x}+2\pi m/d$, $\kappa_x^n = k_{1y}+2\pi n/d$, $k_u^{(m,n)} = \sqrt{k_1^2-(\kappa_x^m)^2-(\kappa_y^n)^2}$, and $k_d^{(m,n)} = \sqrt{k_{I+1}^2-(\kappa_x^m)^2-(\kappa_y^n)^2}$ and the sign of the square root is taken as positive real or positive imaginary. The existence and uniqueness of the solution of the BVP (\ref{eq:helmholtz})-(\ref{eq:radiation}) are briefly discussed in \cite{cho2015robust} including at Wood anomalies.

In Section \ref{sec:formulation}, we describe the periodized boundary integral representation and its discretization for the BVP (\ref{eq:helmholtz})-(\ref{eq:radiation}), where a new specialized quadrature is introduced in Section \ref{sec:zeta_trap}. Section \ref{sec:solver} describes the fast solution procedure for the discretized system by a reduction into block tridiagonal form. Several numerical examples are presented in Section \ref{sec:results} and the paper is concluded in Section \ref{sec:conclusion}.

\section{Boundary integral formulation, periodizing scheme, and its discretization}\label{sec:formulation}
From the periodizing idea in two-dimensions\cite{cho2015robust}, the scattered field or solution of the Helmholtz equation (\ref{eq:helmholtz}) in each layer is represented by the sum of near- and far-field contribution. The near-field contribution uses the free-space Green's function in an integral equation on the interfaces in the unit cell and its 8 immediate neighbors. The far-field contribution uses artificial (proxy) point sources on a sphere that is centered on and enclosing the unit cell.

We first define the standard single- and double-layer potentials \cite{colton1998inverse} for the Helmholtz equation residing on a general surface $\Gamma$ at wavenumber $k_i$ for the $i^\text{th}$ layer,
\begin{align}
(\cS_\Gamma^i\sigma)(\rr) :=\int_\Gamma G^i(\rr,\rr')\sigma(\rr')\,\d S_{\rr'},~~ & (\cD_\Gamma^i\tau)(\rr) :=\int_\Gamma \frac{\pd G^i}{\pd\nn'}(\rr,\rr')\tau(\rr')\,\d S_{\rr'},
\end{align}
where $\nn'$ is the unit normal on $\Gamma$ at $\rr'$, and where
\begin{equation}
G^i(\rr,\rr') := \frac{e^{ik_i|\rr-\rr'|}}{4\pi|\rr-\rr'|}
\label{eq:helmholtz_kernel}
\end{equation}
is the free-space Green's function at wavenumber $k_i$. The normal derivative of the potentials with respect to the unit normal vector $\nn$ at the target point $\rr$ are defined as
\begin{align}
(\cD_\Gamma^{i, *}\tau)(\rr) :=\int_\Gamma \frac{\pd G^i}{\pd\nn}(\rr,\rr')\tau(\rr')\,\d S_{\rr'},~~ & (\cT_\Gamma^{i}\tau)(\rr) :=\int_\Gamma \frac{\pd^2 G^i}{\pd\nn \pd\nn'}(\rr,\rr')\tau(\rr')\,\d S_{\rr'}.
\end{align}
Define the phased contribution from the nearest neighbors (indicated with a tilde)
\begin{align}
(\tilde{\cS}_\Gamma^i\sigma)(\rr) &:=\sum_{\substack{-1\leq l_x,l_y\leq1\\ \dd=d\,\langle l_x,l_y,0\rangle}}\alpha_x^{l_x}\alpha_y^{l_y}\int_\Gamma G^i(\rr,\rr'+\dd)\sigma(\rr')\,\d S_{\rr'} \label{psingle}\\
(\tilde{\cD}_\Gamma^i\tau)(\rr) &:=\sum_{\substack{-1\leq l_x,l_y\leq1\\ \dd=d\,\langle l_x,l_y,0\rangle}}\alpha_x^{l_x}\alpha_y^{l_y}\int_\Gamma \frac{\pd G^i}{\pd\nn'}(\rr,\rr'+\dd)\tau(\rr')\,\d S_{\rr'}.\label{pdouble}
\end{align}
The operators $\tilde{\cD}^{i,*}_\Gamma$ and $\tilde{\cT}^i_\Gamma$ are defined in the same manner. Furthermore, we will use two subscripts to denote target and source interfaces, for example,
\begin{equation}
(\tilde{\cS}_{\Gamma_t,\Gamma_s}^i\sigma)(\rr) :=\sum_{\substack{-1\leq l_x,l_y\leq1\\ \dd=d\,\langle l_x,l_y,0\rangle}}\alpha_x^{l_x}\alpha_y^{l_y}\int_{\Gamma_s} G^i(\rr,\rr'+\dd)\sigma(\rr')\,\d S_{\rr'},\quad \mathbf{r}\in\Gamma_t,
\end{equation}
represents the single layer operation defined in (\ref{psingle}) at the target interface $\Gamma_t$ due to the source interface $\Gamma_s$. Other source-target interaction operators, such as $\tilde{\cD}_{\Gamma_t,\Gamma_s}^i$, $\tilde{\cD}_{\Gamma_t,\Gamma_s}^{i, *}$, and $\tilde{\cT}_{\Gamma_t,\Gamma_s}^i$ are similarly defined.

Next we define the proxy points $\{\yy^i_p\}_{p=1}^P$ to be a spherical grid on the sphere $\mathbb{S}_i$ of radius $R$ and centered on the domain $\Omega_i$ (see Fig.~\ref{fig:periodization}(c)). Then the proxy basis functions for the $i^\text{th}$ layer are
\begin{equation}
\phi^i_p(\rr) := \frac{\pd G^i}{\pd\nn_p}(\rr,\yy^i_p) + ik_iG^i(\rr,\yy^i_p),\quad \rr\in\Omega_i,\quad p = 1,\dots,P
\end{equation}
where $\nn_p$ is the outward-pointing unit normal to $\mathbb{S}_i$ at $\yy^i_p$.

Combining the above definitions of near-field layer potentials and proxy basis, the ansatz for the scattered field in each layer is then given by
\begin{equation}
\begin{aligned}
u_1(\mathbf{r})&=\tilde{\cD}^1_{\Gamma_1}\tau_1+\tilde{\cS}^1_{\Gamma_1}\sigma_1+\sum_{p=1}^Pc^1_p\phi^1_p, & &\mathbf{r} \in \Omega_1,\\
u_i(\mathbf{r})&=\tilde{\cD}^i_{\Gamma_{i-1}}\tau_{i-1}+\tilde{\cS}^i_{\Gamma_{i-1}}\sigma_{i-1}+\tilde{\cD}^i_{\Gamma_i}\tau_i+\tilde{\cS}^i_{\Gamma_i}\sigma_i+\sum_{p=1}^Pc^i_p\phi^i_p, & &\mathbf{r} \in \Omega_i, i=2,3,\cdots I,\\
u_{I+1}(\mathbf{r})&=\tilde{\cD}^{I+1}_{\Gamma_I}\tau_I+\tilde{\cS}^{I+1}_{\Gamma_I}\sigma_I+\sum_{p=1}^Pc^{I+1}_p\phi^{I+1}_p, & &\mathbf{r} \in \Omega_{I+1}.
\end{aligned}\label{eq:ansatz}
\end{equation}
where the unknowns are the density functions $\tau_i$ and $\sigma_i$, $1\leq i \leq I$, and the proxy strengths $c^i:=\{c^i_p\}_{p=1}^P$, $1\leq i \leq I+1.$

The ansatz \eqref{eq:ansatz} must satisfy the continuity conditions at the interfaces $\Gamma_i$, the quasi-periodic conditions, and the radiation conditions. Following a similar procedure as in \cite{cho2019spectrally,cho2015robust}, one can substitute the ansatz \eqref{eq:ansatz} into (\ref{eq:continuity_cond}--\ref{eq:radiation}) and use the standard jump relations \cite[Thm 3.1]{colton1998inverse} to obtain a linear system
\begin{align}
\mathbf{A}\bs\eta+\mathbf{B}\mathbf{c} \phantom{\, + \mathbf{W}\mathbf{a} \,} &= \mathbf{f},\label{mat_eq:continuity}\\
\mathbf{C}\bs\eta+\mathbf{Q}\mathbf{c} \phantom{\, + \mathbf{W}\mathbf{a} \,} &= \mathbf{0},\label{mat_eq:periodic}\\
\mathbf{Z}\bs\eta+\mathbf{V}\mathbf{c} + \mathbf{W}\mathbf{a} &= \mathbf{0},\label{mat_eq:radiation}
\end{align}
where we will next define the matrices and vectors in the system.

The first two equations \eqref{mat_eq:continuity} and \eqref{mat_eq:periodic}, respectively, account for the continuity conditions at the interfaces and the quasi-periodic conditions, where $\mathbf{A}$ is an $I$-by-$I$ block-tridiagonal matrix with the nonzero blocks
\begin{equation}
\begin{aligned}
\mathbf{A}_{i,i} &= 
	\begin{bmatrix}
	\mathbf{I} + (\tilde{\cD}_{\Gamma_i,\Gamma_i}^i-\tilde{\cD}_{\Gamma_i,\Gamma_i}^{i+1}) & (\tilde{\cS}_{\Gamma_i,\Gamma_i}^i-\tilde{\cS}_{\Gamma_i,\Gamma_i}^{i+1})\\
	(\tilde{\cT}_{\Gamma_i,\Gamma_i}^i-\tilde{\cT}_{\Gamma_i,\Gamma_i}^{i+1}) & -\mathbf{I} + (\tilde{\cD}_{\Gamma_i,\Gamma_i}^{i,*}-\tilde{\cD}_{\Gamma_i,\Gamma_i}^{i+1,*})
	\end{bmatrix},\quad i=1,\dots,I,\\
\mathbf{A}_{i,i+1} &= 
	\begin{bmatrix}
	-\tilde{\cD}_{\Gamma_i,\Gamma_{i+1}}^{i+1} & -\tilde{\cS}_{\Gamma_i,\Gamma_{i+1}}^{i+1}\\
	-\tilde{\cT}_{\Gamma_i,\Gamma_{i+1}}^{i+1} & -\tilde{\cD}_{\Gamma_i,\Gamma_{i+1}}^{i+1,*}
	\end{bmatrix},\quad i=1,\dots,I-1,\\
\mathbf{A}_{i+1,i} &= 
	\begin{bmatrix}
	\tilde{\cD}_{\Gamma_{i+1},\Gamma_i}^{i+1} & \tilde{\cS}_{\Gamma_{i+1},\Gamma_i}^{i+1}\\
	\tilde{\cT}_{\Gamma_{i+1},\Gamma_i}^{i+1} & \tilde{\cD}_{\Gamma_{i+1},\Gamma_i}^{i+1,*}
	\end{bmatrix},\quad i=1,\dots,I-1,
\end{aligned}
\label{eq:A_block}
\end{equation}
where $\mathbf{B}$ is an $I$-by-$(I+1)$ block-bidiagonal matrix with the nonzero blocks
\begin{equation}
\begin{aligned}
\mathbf{B}_{i,i} &= \begin{bmatrix} \phi^{i}_1\big|_{\Gamma_{i}} & \dots & \phi^{i}_P\big|_{\Gamma_{i}} \\ \tfrac{\pd\phi^{i}_1}{\pd \nn}\big|_{\Gamma_{i}} & \dots & \tfrac{\pd\phi^{i}_P}{\pd \nn}\big|_{\Gamma_{i}} \end{bmatrix},
\,
\mathbf{B}_{i,i+1} = \begin{bmatrix} -\phi^{i+1}_1\big|_{\Gamma_{i}} & \dots & -\phi^{i+1}_P\big|_{\Gamma_{i}} \\ -\tfrac{\pd\phi^{i+1}_1}{\pd \nn}\big|_{\Gamma_{i}} & \dots & -\tfrac{\pd\phi^{i+1}_P}{\pd \nn}\big|_{\Gamma_{i}} \end{bmatrix},\, i=1,\dots,I,
\end{aligned}
\end{equation}
where $\mathbf{C}$ is an $(I+1)$-by-$I$ block-bidiagonal matrix with the nonzero blocks
\begin{equation}
\begin{aligned}
\mathbf{C}_{i,i} &= \sum_{\substack{-1\leq l \leq1 \\ \dd_x = d\langle 1,l,0\rangle\\ \dd_y = d\langle l,1,0\rangle}}\begin{bmatrix} 
\alpha^{-1}_x \cD^{i}_{R_i+\dd_x,\Gamma_i}-\alpha^2_x \cD^{i}_{L_i-\dd_x,\Gamma_i} & \alpha^{-1}_x \cS^{i}_{R_i+\dd_x,\Gamma_i}-\alpha^2_x \cS^{i}_{L_i-\dd_x,\Gamma_i} \\ 
\alpha^{-1}_x \cT^{i}_{R_i+\dd_x,\Gamma_i}-\alpha^2_x \cT^{i}_{L_i-\dd_x,\Gamma_i} & \alpha^{-1}_x \cD^{i,*}_{R_i+\dd_x,\Gamma_i}-\alpha^2_x \cD^{i,*}_{L_i-\dd_x,\Gamma_i} \\
\alpha^{-1}_y \cD^{i}_{B_i+\dd_y,\Gamma_i}-\alpha^2_y \cD^{i}_{F_i-\dd_y,\Gamma_i} & \alpha^{-1}_y \cS^{i}_{B_i+\dd_y,\Gamma_i}-\alpha^2_y \cS^{i}_{F_i-\dd_y,\Gamma_i} \\ 
\alpha^{-1}_y \cT^{i}_{B_i+\dd_y,\Gamma_i}-\alpha^2_y \cT^{i}_{F_i-\dd_y,\Gamma_i} & \alpha^{-1}_y \cD^{i,*}_{B_i+\dd_y,\Gamma_i}-\alpha^2_y \cD^{i,*}_{F_i-\dd_y,\Gamma_i}
\end{bmatrix},\, i=1,\dots,I,
\\
\mathbf{C}_{i+1,i} &= \sum_{\substack{-1\leq l \leq1 \\ \dd_x = d\langle 1,l,0\rangle\\ \dd_y = d\langle l,1,0\rangle}}\begin{bmatrix} 
\alpha^{-1}_x \cD^{i+1}_{R_{i+1}+\dd_x,\Gamma_{i}}-\alpha^2_x  \cD^{i+1}_{L_{i+1}-\dd_x,\Gamma_{i}} & \alpha^{-1}_x \cS^{i+1}_{R_{i+1}+\dd_x,\Gamma_{i}}-\alpha^2_x  \cS^{i+1}_{L_{i+1}-\dd_x,\Gamma_{i}}\\ 
\alpha^{-1}_x \cT^{i+1}_{R_{i+1}+\dd_x,\Gamma_{i}}-\alpha^2_x  \cT^{i+1}_{L_{i+1}-\dd_x,\Gamma_{i}} & \alpha^{-1}_x \cD^{i+1,*}_{R_{i+1}+\dd_x,\Gamma_{i}}-\alpha^2_x  \cD^{i+1,*}_{L_{i+1}-\dd_x,\Gamma_{i}} \\
\alpha^{-1}_y \cD^{i+1}_{B_{i+1}+\dd_y,\Gamma_{i}}-\alpha^2_y  \cD^{i+1}_{F_{i+1}-\dd_y,\Gamma_{i}} & \alpha^{-1}_y \cS^{i+1}_{B_{i+1}+\dd_y,\Gamma_{i}}-\alpha^2_y  \cS^{i+1}_{F_{i+1}-\dd_y,\Gamma_{i}} \\ 
\alpha^{-1}_y \cT^{i+1}_{B_{i+1}+\dd_y,\Gamma_{i}}-\alpha^2_y  \cT^{i+1}_{F_{i+1}-\dd_y,\Gamma_{i}} & \alpha^{-1}_y \cD^{i+1,*}_{B_{i+1}+\dd_y,\Gamma_{i}}-\alpha^2_y  \cD^{i+1,*}_{F_{i+1}-\dd_y,\Gamma_{i}}
\end{bmatrix},\, i=1,\dots,I,
\end{aligned}
\label{eq:C_block}
\end{equation}
and where $\mathbf{Q}$ is an $(I+1)$-by-$(I+1)$ block-diagonal matrix with the nonzero blocks
\begin{equation}
\mathbf{Q}_{i,i} = \begin{bmatrix} 
\phi^{i}_1\big|_{R_i} - \alpha_x \phi^{i}_1\big|_{L_i} & \dots & \phi^{i}_P\big|_{R_i} - \alpha_x \phi^{i}_P\big|_{L_i} \\
\tfrac{\pd\phi^{i}_1}{\pd n}\big|_{R_i} - \alpha_x \tfrac{\pd\phi^{i}_1}{\pd n}\big|_{L_i} & \dots & \tfrac{\pd\phi^{i}_P}{\pd n}\big|_{R_i} - \alpha_x \tfrac{\pd\phi^{i}_P}{\pd n}\big|_{L_i}\\
\phi^{i}_1\big|_{B_i} - \alpha_y \phi^{i}_1\big|_{F_i} & \dots & \phi^{i}_P\big|_{B_i} - \alpha_y \phi^{i}_P\big|_{F_i}\\
\tfrac{\pd\phi^{i}_1}{\pd n}\big|_{B_i} - \alpha_y \tfrac{\pd\phi^{i}_1}{\pd n}\big|_{F_i} & \dots & \tfrac{\pd\phi^{i}_P}{\pd n}\big|_{B_i} - \alpha_y \tfrac{\pd\phi^{i}_P}{\pd n}\big|_{F_i}
\end{bmatrix},\, i=1,\dots,I+1.
\label{eq:Q_block}
\end{equation}
The corresponding vectors in (\ref{mat_eq:continuity}-\ref{mat_eq:periodic}), including the densities $\bs\eta$ on the interfaces, the proxy source strengths $\mathbf{c}$ on the proxy spheres, and the right-hand side functions $\mathbf{f}$, are given by
\begin{equation}
\begin{aligned}
\bs\eta &= \begin{bmatrix} \bs\eta_1 & \dots & \bs\eta_I \end{bmatrix}^T,\, \mbox{ where }\bs\eta_i :=\begin{bmatrix} \tau_i \\ \sigma_i \end{bmatrix},\\
\mathbf{c} &= \begin{bmatrix} c^1 & \dots & c^{I+1} \end{bmatrix}^T,\, \mathbf{f} = \begin{bmatrix} -u^{inc}|_{\Gamma_1} & -\frac{\pd u^{inc}}{\pd\nn}|_{\Gamma_1} & 0 & \dots & 0 \end{bmatrix}^T.
\end{aligned}
\end{equation}

Equation \eqref{mat_eq:radiation} accounts for the radiation conditions at the artificial interfaces $U$ and $D$, where $\mathbf{Z}$ is a $2$-by-$I$ and $\mathbf{V}$ a $2$-by-$(I+1)$ block-sparse matrix, given by
\begin{equation}
\begin{aligned}
\mathbf{Z} &=
\begin{bmatrix}
\mathbf{Z}_U & \mathbf{0} & \dots & \mathbf{0}\\
\mathbf{0} & \dots & \mathbf{0} & \mathbf{Z}_D
\end{bmatrix},
\,
\mathbf{Z}_U = \begin{bmatrix}
\tilde{\cD}^1_{U,\Gamma_1} & \tilde{\cS}^1_{U,\Gamma_1} \\ \tilde{\cT}^1_{U,\Gamma_1} & \tilde{\cD}^{1,*}_{U,\Gamma_1}
\end{bmatrix},
\,
\mathbf{Z}_D = \begin{bmatrix}
\cD^{I+1}_{D,\Gamma_I} & \cS^{I+1}_{\cD,\Gamma_I} \\ \cT^{I+1}_{\cD,\Gamma_I} & \cD^{I+1,*}_{D,\Gamma_I}
\end{bmatrix},\\
\mathbf{V} &=
\begin{bmatrix}
\mathbf{V}_U & \mathbf{0} & \dots & \mathbf{0}\\
\mathbf{0} & \dots & \mathbf{0} & \mathbf{V}_D
\end{bmatrix},
\,
\mathbf{V}_U = \begin{bmatrix} \phi^1_1\big|_U & \dots & \phi^1_P\big|_U \\ \tfrac{\pd\phi^1_1}{\pd n}\big|_U &\dots & \tfrac{\pd\phi^1_P}{\pd n}\big|_U \end{bmatrix},
\,
\mathbf{V}_D = \begin{bmatrix} \phi^{I+1}_1\big|_D & \dots & \phi^{I+1}_P\big|_D \\ \tfrac{\pd\phi^{I+1}_1}{\pd n}\big|_D & \dots & \tfrac{\pd\phi^{I+1}_P}{\pd n}\big|_D \end{bmatrix},
\end{aligned}
\end{equation}
and where $\mathbf{W}$ is a $2$-by-$2$ block-diagonal matrix, in which the $(1,1)$-block $\mathbf{W}_U$ and the $(2,2)$-block $\mathbf{W}_D$ are given by
\begin{equation}
\mathbf{W}_U = \begin{bmatrix} -e^{i(\kappa_x^m x+\kappa_y^n y)}\big|_U \\ -ik_u^{(m,n)}e^{i(\kappa_x^m x+\kappa_y^n y)}\big|_U \end{bmatrix},
\,
\mathbf{W}_D = \begin{bmatrix} -e^{i(\kappa_x^m x+\kappa_y^n y)}\big|_D \\ ik_d^{(m,n)}e^{i(\kappa_x^m x+\kappa_y^n y)}\big|_D \end{bmatrix}, m,n = -K,-K+1,\dots,K-1,K.
\end{equation}
The corresponding vector $\mathbf{a}=[\mathbf{a}^u\,\,\,\mathbf{a}^d]^T$ contains the Rayleigh-Block coefficients such that $\mathbf{a}^u = [a_{mn}^u]$ and $\mathbf{a}^d = [a_{mn}^d]$, $m,n = -K,-K+1,\dots,K-1,K.$

\subsection{Discretization of functions and operators}\label{sec:discretize}

To accurately solve the system (\ref{mat_eq:continuity}-\ref{mat_eq:radiation}), we describe high-order collocation methods for the discretization of the integral operators in $\mathbf{A},\mathbf{C},\mathbf{Z}$ and the functions in the rest of the matrix blocks.

We choose collocation points on the interfaces and the walls as follows. On each side wall $W$, where $W$ is one of $R_i,L_i,B_i$, and $F_i$, for $i=1,\dots,I+1$, we sample $M_w$ points $\{\mathbf{x}_m^W\}_{m=1}^{M_w}$ that correspond to a 2-D tensor product of 1-D quadrature nodes. For the top and bottom walls $U$ and $D$, we use $M$ equally-spaced nodes (associated with the double Trapezoidal rule) $\{\mathbf{x}_m^U\}_{m=1}^M$ on $U$ and $\{\mathbf{x}_m^D\}_{m=1}^M$ on $D$. For the interfaces $\Gamma_i$ which are smooth and doubly periodic, we assume the parameterizations $g_i$ on the rectangle $[0,1]^2$ are given by 
\begin{equation}
\Gamma_i = \left\{\mathbf{r}=(x,y,z)\,|\,z=g_i(x,y), (x,y)\in[0,1]^2\right\}.
\label{eq:parameterization}
\end{equation}
Fixing a set of $N$ equally-spaced nodes $\{(x_n,y_n)\}_{n=1}^N\subset[0,1]^2$ associated with the double Trapezoidal rule, we can then sample $\Gamma_i$ at $N$ points $\{\mathbf{x}_n^i=(x_n,y_n,g_i(x_n,y_n))\}_{n=1}^N$; let $\{w_n^i\}_{n=1}^N$ be the associated quadrature weights such that 
\begin{equation}
\int_{\Gamma_i}f(\mathbf{r})\,\d S_\mathbf{r} \approx \sum_{n=1}^Nf(\mathbf{x}_n^i)w_n^i
\label{eq:smooth_quadr}
\end{equation}
holds to high accuracy for any given smooth periodic function $f$ on $\Gamma_i$.

The discretization of the matrix blocks in the system (\ref{mat_eq:continuity}-\ref{mat_eq:radiation}) becomes straightforward with the collocation points and the quadrature above (except for the diagonal blocks of $\mathbf{A}$, which we will address shortly). For example, in the $\mathbf{Q}_{i,i}$ block in \eqref{eq:Q_block}, an entry $\phi^{i}_p\big|_{R_i} - \alpha_x \phi^{i}_p\big|_{L_i}$ in the first row is replaced by an $M_w\times P$ matrix
$$
\Big(\phi^{i}_p(\mathbf{x}_m^{R_i}) - \alpha_x \phi^{i}_p(\mathbf{x}_m^{L_i})\Big),\quad m=1,\dots,M_w,\, p=1,\dots,P.
$$
All the entries in $\mathbf{B}, \mathbf{Q}, \mathbf{V}$, and $\mathbf{W}$ can be discretized similarly. On the other hand, the operators in $\mathbf{A}, \mathbf{C}, \mathbf{Z}$ (except for the diagonal blocks of $\mathbf{A}$) can be discretized using the smooth quadrature \eqref{eq:smooth_quadr}. For example in the $\mathbf{C}_{i,i}$ block in \eqref{eq:C_block}, an entry $\alpha^{-1}_x S^{i}_{R_i+\dd_x,\Gamma_i}-\alpha^2_x S^{i}_{L_i-\dd_x,\Gamma_i}$ can be replaced by an $M_w\times N$ matrix
$$
\Big(\alpha^{-1}_x G^i(\mathbf{x}_m^{R_i}+\dd_x,\mathbf{x}_n^i)w_n^i-\alpha^2_x G^i(\mathbf{x}_m^{L_i}-\dd_x,\mathbf{x}_n^i)w_n^i\Big),\quad m=1,\dots,M_w,\, n=1,\dots,N.
$$

We now consider the discretization of the diagonal blocks $\mathbf{A}_{i,i}$ in \eqref{eq:A_block}. These blocks involve interactions from $\Gamma_i$ to itself, where the involved integrals become singular, so the smooth quadrature ceases to be accurate. We will focus on discretizing the entries involving the single-layer operator $\tilde{S}$; other self-interactions involving the operators $\tilde{D}, \tilde{D}^*$, and $\tilde{T}$ can then be discretized similarly. Consider $\tilde{S}_{\Gamma_1,\Gamma_1}^1-\tilde{S}_{\Gamma_1,\Gamma_1}^{2}$ in the $\mathbf{A}_{1,1}$ block, the associated integral operator is
\begin{equation}
\Big((\tilde{\cS}_{\Gamma_1,\Gamma_1}^1-\tilde{\cS}_{\Gamma_1,\Gamma_1}^{2})\sigma\Big)(\rr) =\sum_{\substack{-1\leq l_x,l_y\leq1\\ \dd=d\langle l_x,l_y,0\rangle}}\alpha_x^{l_x}\alpha_y^{l_y}\int_{\Gamma_1} \Big(G^1(\rr,\rr'+\dd)-G^{2}(\rr,\rr'+\dd)\Big)\sigma(\rr')\,\d S_{\rr'},\, \mathbf{r}\in\Gamma_1,
\label{eq:SLP_diag_block}
\end{equation}
which consists of contributions from $\Gamma_1+\dd$ to $\Gamma_1$. When $\mathbf{d}\neq\mathbf{0}$, one can still discretize \eqref{eq:SLP_diag_block} using the smooth quadrature \eqref{eq:smooth_quadr} with nodes $\{\mathbf{x}_n^1+\mathbf{d}\}_{n=1}^N$ and weights $\{w_n^1\}_{n=1}^N$. When $\mathbf{d}=\mathbf{0}$, applying the smooth quadrature will result in a matrix
\begin{equation}
\Big(S_{\Gamma_1,\Gamma_1}^1-S_{\Gamma_1,\Gamma_1}^{2}\Big)_{m,n} = \Big(G^1(\mathbf{x}_m^1,\mathbf{x}_n^1)-G^{2}(\mathbf{x}_m^1,\mathbf{x}_n^1)\Big)w_n^1,\quad m,n=1,\dots,N
\label{eq:SLP_naive_trapezoid}
\end{equation}
whose diagonal entries are infinite; zeroing out the diagonal entries (i.e., using the ``punctured Trapezoidal rule'') will allow the discretization to converge as $N\to\infty$, but only very slowly. We next describe a new quadrature that makes corrections near the diagonal of \eqref{eq:SLP_naive_trapezoid} to obtain a high-order discretization.

\subsection{Special quadrature for self interaction}\label{sec:zeta_trap}

We describe a specialized quadrature for \eqref{eq:SLP_diag_block} which modifies the entries of \eqref{eq:SLP_naive_trapezoid}. This quadrature is based on the error-corrected Trapezoidal quadrature method \cite{wu2022unified}, which is a recent generalization of \cite{wu2021corrected} that achieves high-order accuracies. For a fixed target point $\rr_0:=\mathbf{x}_m^1\in\Gamma_1$ (where $m$ is fixed), consider the integral
\begin{equation}
I = \int_{\Gamma_1} \Big(G^1(\rr_0,\rr)-G^{2}(\rr_0,\rr)\Big)\sigma(\rr)\,\d S_{\rr}
\label{eq:SLP1_integral}
\end{equation}
and its boundary-corrected, punctured Trapezoidal rule approximation
\begin{equation}
Q_h = \sum_{\substack{n=1\\n\neq m}}^N \Big(G^1(\mathbf{x}_m^1,\mathbf{x}_n^1)-G^{2}(\mathbf{x}_m^1,\mathbf{x}_n^1)\Big)w_n^1\sigma(\mathbf{x}_n^1) + C_h.
\end{equation}
where $h$ is the grid spacing, and where the boundary correction $C_h$ is a linear combination of the values and derivatives of the integrand in \eqref{eq:SLP1_integral}; the coefficients of the linear combination appears in the two-dimensional Euler-Maclaurin formula (see, for example, \cite[Theorem 2.6]{lyness1976error}) and do not depend on the integrand. The exact form of $C_h$ is unimportant for our purpose, because the surface $\Gamma_1$ is periodic thus the boundary errors will vanish. To analyze the error of $I - Q_h$, we only need to assume that $C_h$ is a sufficiently high-order boundary correction so that the dominant error always comes from the singularity of the integrand.

\begin{lemma}
The error of the Trapezoidal rule approximation of \eqref{eq:SLP1_integral}, $E_h := I - Q_h$, has an asymptotic expansion
\begin{equation}
E_h \sim c_1h^3 + c_2h^5 + \dots = \sum_{p=1}^\infty c_ph^{2p+1},\quad \text{as }h\to0,
\label{eq:SLP_error}
\end{equation}
where the coefficients $c_1,c_2,\dots$ only depend on the following information at the target point $\rr_0$: the derivatives of the parameterization of the surface $\Gamma_1$ at $\rr_0$, and the value and derivatives of a smooth function $\varphi(\rr)$ at $\rr_0$, where $\varphi(\rr)$ can be explicitly constructed from the integrand of \eqref{eq:SLP1_integral}.
\end{lemma}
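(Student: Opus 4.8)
The plan is to isolate the singular part of the kernel, reduce the problem to a single scalar integral with a point singularity, and then invoke the local error analysis of the corrected Trapezoidal rule \cite{wu2022unified}.

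First I would Taylor-expand the two exponentials in $r:=|\rr-\rr_0|$. Writing $4\pi G^j = e^{ik_jr}/r = \cos(k_jr)/r + i\sin(k_jr)/r$ and separating even and odd powers of $r$, one sees that $\sin(k_jr)/r$ is an entire function of $r^2$, while $\cos(k_jr)/r = 1/r + r\,(\text{entire in }r^2)$. The crucial point is that the $1/r$ singularities of $G^1$ and $G^2$ are \emph{identical} and cancel in the difference, leaving
\[
4\pi\big(G^1-G^2\big) = \Phi_1(r^2) + r\,\Phi_2(r^2),
\]
with $\Phi_1,\Phi_2$ entire and $\Phi_2(r^2)=\big(\cos(k_1r)-\cos(k_2r)\big)/r^2$. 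Since $r^2=|\rr-\rr_0|^2$ is a smooth function of the surface parameters, the term $\Phi_1(r^2)\sigma$ gives a smooth periodic integrand on $\Gamma_1$, so its punctured-Trapezoidal error is beyond all algebraic orders in $h$ and contributes nothing to \eqref{eq:SLP_error}. This reduces the lemma to analyzing $\int_{\Gamma_1} r\,\varphi(\rr)\,\d S_{\rr}$ with the smooth density $\varphi(\rr):=(4\pi)^{-1}\Phi_2(|\rr-\rr_0|^2)\sigma(\rr)$, which is precisely the $\varphi$ named in the statement; the integrand is bounded but non-smooth at $\rr_0$.

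Next I would pull the integral back to $[0,1]^2$ and localize at the node corresponding to $\rr_0=(x_0,y_0,g_1(x_0,y_0))$. Setting $(s,t)=(x-x_0,y-y_0)$, the only non-smooth factor is $\rho(s,t):=|\rr(x_0+s,y_0+t)-\rr_0|$, whose square is smooth with a nondegenerate minimum at the origin, so $\rho=\sqrt{Q(s,t)}\,(1+O(|(s,t)|))$ with $Q$ the first--fundamental--form quadratic. I would then apply the framework of \cite{wu2022unified}: because $\Gamma_1$ is periodic and $C_h$ annihilates the boundary contributions, the whole error localizes to the singularity; rescaling $(s,t)=h(\xi,\eta)$ and Taylor-expanding the smooth factors — the density, the Jacobian, and the curvature corrections of $\rho$ beyond its leading homogeneous part — expresses $E_h$ as a series in $h$ whose coefficients are zeta-regularized Epstein lattice sums for the form $Q$ times local Taylor data. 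A homogeneous singular piece of degree $\nu$ contributes at order $h^{\nu+2}$, so the leading term is $h^{1+2}=h^3$, matching $c_1h^3$, and every coefficient depends only on the derivatives of the parameterization of $\Gamma_1$ at $\rr_0$ (through $Q$, its corrections, and the Jacobian) and on the derivatives of $\varphi$ at $\rr_0$, as claimed.

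Finally, the odd-power structure. In the flat model $\rho=\sqrt{s^2+t^2}$ this is immediate: the singularity is even under $(s,t)\mapsto(-s,-t)$, the punctured lattice $\mathbb{Z}^2\setminus\{0\}$ is symmetric, and every odd-degree Taylor monomial of the smooth factor multiplies an odd lattice sum that cancels in pairs $n\leftrightarrow-n$; only even total degrees survive, producing the gaps $h^3,h^5,\dots$. The hard part will be carrying this through for a genuinely curved surface, where $\rho$ fails to be even in $(s,t)$ because of the cubic-and-higher corrections to $\rho^2$. Here I would rely on the analytic continuation of the Epstein zeta function as developed in \cite{wu2021zeta,wu2022unified}, where the even-order coefficients are identified with zeta values at the trivial zeros and hence vanish. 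Showing that the geometric corrections never generate an $h^{2p}$ term — i.e.\ that all even powers cancel in the presence of curvature — is the main technical obstacle; once it is established, the expansion \eqref{eq:SLP_error} follows.
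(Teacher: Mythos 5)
Your proposal is correct and follows essentially the same route as the paper: the same splitting of $G^1-G^2$ into a smooth part plus $r$ times a smooth function $\varphi$ (via cancellation of the $1/r$ singularities), followed by localization at $\rr_0$ and the generalized Euler--Maclaurin/zeta-quadrature expansion of \cite{wu2022unified}. The only difference is that what you flag as the main remaining obstacle --- showing that curvature never generates even powers of $h$ --- is exactly what the cited Theorem~3.2 of \cite{wu2022unified} already supplies (its error terms are $h^{n-2m+3}$ with $n$ restricted to be even), so the paper simply invokes that result rather than re-deriving the Epstein-zeta parity argument.
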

\begin{proof}
Let $r:=|\rr_0-\rr|$, then using the definition \eqref{eq:helmholtz_kernel} and the parameterization, the integral \eqref{eq:SLP1_integral} can be rewritten as
\begin{equation}
\begin{aligned}
I &= \int_0^1\int_0^1 \Big(r\,\psi_c(r) + i\psi_s(r)\Big)\sigma(\rr)\,J(\rr)\,\d x\,\d y\\
 &= \int_0^1\int_0^1 r\,\psi_c(r)\,\sigma(\rr)\,J(\rr)\,\d x\,\d y + i\int_0^1\int_0^1\psi_s(r)\sigma(\rr)\,J(\rr)\,\d x\,\d y\\
 &:= I_1 + iI_2
\end{aligned}
\end{equation}
where $\rr=\rr(x,y)$ is the parameterization \eqref{eq:parameterization} for $\Gamma_1$, where $J(\rr)$ is the Jacobian, and where
\begin{equation*}
\psi_c(r):=\frac{\cos(k_1r)-\cos(k_2r)}{r^2}\quad \text{and}\quad \psi_s(r):=\frac{\sin(k_1r)-\sin(k_2r)}{r}
\end{equation*}
are smooth functions of $r^2=|\rr_0-\rr|^2$, thus are also smooth functions of $\rr$ on $\Gamma_1$. The Trapezoidal rule approximation $Q_h=Q_{h,1}+iQ_{h,2}$ where $Q_{h,1}$ and $Q_{h,2}$ are the approximation of the integrals $I_1$ and $I_2$, respectively. The imaginary component $I_2$ is smooth thus the approximation $Q_{h,2}$ is high-order accurate, so the error $E_h \sim I_1-Q_{h,1}$. The integrand of $I_1$ can be written as $r\,\varphi(\rr)$, where
\begin{equation}
\varphi(\rr):= \psi_c(r)\sigma(\rr)J(\rr)
\end{equation}
is a smooth function of $\rr(x,y)$. Following \cite[Section 3.1]{wu2022unified}, the integrand of $I_1$ has the following expansion at $\rr_0=\rr(x_0,y_0)$
\begin{equation}
r\,\varphi(\rr) \sim \sum_{m=0}^\infty \sum_{n=3m}^\infty\sum_{\substack{l_1,l_2\geq0\\ l_1+l_2=n}}\alpha^{m,n}_{l_1,l_2}\frac{\hat{x}^{l_1}\hat{y}^{l_2}}{\cF(\hat{x},\hat{y})^{m-1/2}}
\end{equation}
where $\hat{x}=x-x_0$ and $\hat{y}=y-y_0$, where the coefficients $\alpha^{m,n}_{l_1,l_2}$ depend on the value and derivatives of $\phi$ and $\rr(x,y)$ at $(x_0,y_0)$, and where $\cF(x,y)$ is the first fundamental form of $\Gamma_1$ at $\rr_0$ defined as
\begin{equation}
\cF(\hat{x},\hat{y})=E\hat{x}^2+2F\hat{x}\hat{y}+G\hat{y}^2,\, E=|\rr_x(x_0,y_0)|^2, F=\rr_x(x_0,y_0)\cdot\rr_y(x_0,y_0), G=|\rr_y(x_0,y_0)|^2.
\end{equation}
Then applying the generalized Euler-Maclaurin formula \cite[Theorem 3.2]{wu2022unified}, we have
\begin{equation}
E_h \sim I_1-Q_{h,1} \sim \sum_{m=0}^\infty \sum_{\substack{n=3m\\ n\text{ even}}}^\infty\sum_{\substack{l_1,l_2\geq0\\ l_1+l_2=n}}-\alpha^{m,n}_{l_1,l_2}C^m_{l_1,l_2}[E,F,G]h^{n-2m+3}
\label{eq:SLP_error_detailed}
\end{equation}
where $C^m_{l_1,l_2}[E,F,G]$ are coefficients that only depend on $E,F,G$ and can be computed based on \cite[Theorem 3.3]{wu2022unified}. To finish the proof, the equation \eqref{eq:SLP_error_detailed} can be rearranged into the form \eqref{eq:SLP_error} by grouping the terms by the powers of $h$.
\end{proof}

\begin{figure}[htbp]
\includegraphics[width=\textwidth]{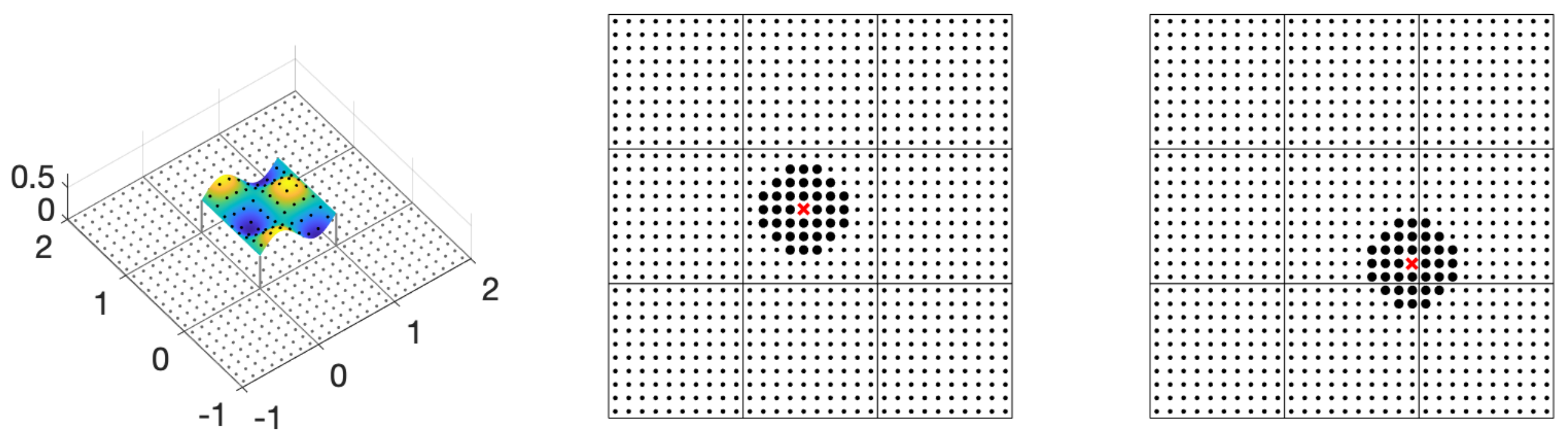}
\caption{Left: A doubly periodic surface with period $d=1$ and parameterized over $[0,1]^2$, sampled with equally-spaced nodes $\{\mathbf{x}_n^i\}_{n=1}^N$. The nodes in the parameter space, $\{(x_n,y_n)\}_{n=1}^N\subset [0,1]^2$ in the $xy$-plane, together with their copies in the 8 nearest neighbors, are shown in all three panels. Middle: local error-correction stencil (thick dots) around a target point (red cross) in the middle of the domain. Right: error-correction stencil around a target point near the edge of the domain; note that the stencil extends outside the domain to the near periodic copies.}\label{fig:interface_grid}
\end{figure}

A high-order corrected Trapezoidal rule for the integral operator \eqref{eq:SLP1_integral} can then be constructed by adding a sufficient number of terms in the error expansion \eqref{eq:SLP_error}. However, a direct computation of \eqref{eq:SLP_error} requires approximating the higher derivatives of $\varphi$ and $\rr(x,y)$. To avoid computing these higher derivatives, the \emph{moment-fitting procedure} from \cite[Section 3.2]{wu2022unified} is used to fit the error \eqref{eq:SLP_error} on a local stencil around the target point $\rr_0$, this procedure only requires evaluating $\varphi(\rr), \rr(x,y)$ and the first fundamental form $\cF(\hat{x},\hat{y})$ on the stencil. Fig.~\ref{fig:interface_grid} shows the stencil for a $5^\text{th}$ order correction; note that when the stencil extends outside the central domain, the corresponding weights should be constructed with appropriate phased contribution involving the Bloch phases $\alpha_x$ and $\alpha_y$. We refer to \cite{wu2022unified} for more details on higher order discretization.

\section{Rearrangement of equations, block elimination, and fast solver}\label{sec:solver}
The system (\ref{mat_eq:continuity}-\ref{mat_eq:radiation}) is sparse and can be transformed into a block-tridiagonal system as follows.

First, rearrange the rows and columns of (\ref{mat_eq:continuity}-\ref{mat_eq:radiation}) to form the block $2\times2$ system
\begin{equation}
\begin{bmatrix}
\mathbf{A} & \tilde{\mathbf{B}}\\
\tilde{\mathbf{C}} & \tilde{\mathbf{Q}}
\end{bmatrix}
\begin{bmatrix}
\bs{\eta} \\
\tilde{\mathbf{c}}
\end{bmatrix}
=
\begin{bmatrix}
\mathbf{f} \\
\mathbf{0}
\end{bmatrix},
\label{eq:2x2system}
\end{equation}
where one combines $\mathbf{c}_1$ and $\mathbf{a}^u$, and respectively $\mathbf{c}_{I+1}$ and $\mathbf{a}^d$, to form 
\begin{equation}
\tilde{\mathbf{c}}_1 = \begin{bmatrix} \mathbf{c}_1 \\ \mathbf{a}^u \end{bmatrix},\quad
\tilde{\mathbf{c}}_{I+1} = \begin{bmatrix} \mathbf{c}_{I+1} \\ \mathbf{a}^d \end{bmatrix},\quad
\tilde{\mathbf{c}} = \begin{bmatrix} \tilde{\mathbf{c}}_1 & \mathbf{c}_2 & \dots  & \mathbf{c}_{I} & \tilde{\mathbf{c}}_{I+1}\end{bmatrix}^T
\end{equation}
and, accordingly, $\tilde{\mathbf{Q}}$ is formed by replacing the $(1,1)$-block and the $(I+1,I+1)$-block of $\mathbf{Q}$ with
\begin{equation}
\tilde{\mathbf{Q}}_{1,1} = \begin{bmatrix} \mathbf{Q}_{1,1} & \mathbf{0}\\ \mathbf{V}_U & \mathbf{W}_U \end{bmatrix},\quad \tilde{\mathbf{Q}}_{I+1,I+1} = \begin{bmatrix} \mathbf{Q}_{I+1,I+1} & \mathbf{0}\\ \mathbf{V}_D & \mathbf{W}_D \end{bmatrix}.
\end{equation}
Likewise, $\tilde{\mathbf{B}}$ and $\tilde{\mathbf{C}}$ are formed, respectively, by replacing the $(1,1)$-block and the $(I,I+1)$-block of $\mathbf{B}$, and replacing the $(1,1)$-block and the $(I+1,I)$-block of $\mathbf{C}$, with
\begin{equation}
\begin{aligned}
\tilde{\mathbf{B}}_{1,1} &= \begin{bmatrix} \mathbf{B}_{1,1} & \mathbf{0} \end{bmatrix},\quad \tilde{\mathbf{B}}_{I,I+1} = \begin{bmatrix} \mathbf{B}_{I,I+1} & \mathbf{0} \end{bmatrix},\\
\tilde{\mathbf{C}}_{1,1} &= \begin{bmatrix} \mathbf{C}_{1,1} \\ \mathbf{Z}_U \end{bmatrix},\quad \tilde{\mathbf{C}}_{I+1,I} = \begin{bmatrix} \mathbf{C}_{I+1,I} \\ \mathbf{Z}_D \end{bmatrix}.
\end{aligned}
\end{equation}

Next, eliminate the unknowns $\tilde{\mathbf{c}}$ to reduce the system \eqref{eq:2x2system} into block tridiagonal form
\begin{equation}
\tilde{\mathbf{A}}\bs\eta = \mathbf{f},
\label{eq:tridiag_system}
\end{equation}
where $\tilde{\mathbf{A}}$ is an $I\times I$ block tridiagonal matrix with the nonzero blocks 
\begin{align}
\tilde{\mathbf{A}}_{1,1} &=\mathbf{A}_{1,1}-\tilde{\mathbf{B}}_{1,1}\tilde{\mathbf{Q}}_1^\dagger \tilde{\mathbf{C}}_{1,1}-\mathbf{B}_{1,2}\mathbf{Q}_{2,2}^\dagger\mathbf{C}_{2,1},\\
\tilde{\mathbf{A}}_{i,i} &=\mathbf{A}_{i,i}-\mathbf{B}_{i,i}\mathbf{Q}_i^\dagger\mathbf{C}_{i,i}-\mathbf{B}_{i,i+1}\mathbf{Q}_{i+1,i+1}^\dagger\mathbf{C}_{i+1,i}, & &i=2,3,\dots,I-1,\\
\tilde{\mathbf{A}}_{I,I} &=\mathbf{A}_{I,I}-\tilde{\mathbf{B}}_{I,I}\tilde{\mathbf{Q}}_I^\dagger \tilde{\mathbf{C}}_{I,I}-\mathbf{B}_{I,I+1}\mathbf{Q}_{I+1,I+1}^\dagger\mathbf{C}_{I+1,1},\\
\tilde{\mathbf{A}}_{i,i+1} &=\mathbf{A}_{i,i+1}-\mathbf{B}_{i,i+1}\mathbf{Q}_{i+1,i+1}^\dagger\mathbf{C}_{i+1,i+1}, & &i=1,2,3,\dots,I-1,\\
\tilde{\mathbf{A}}_{i+1,i} &=\mathbf{A}_{i+1,i}-\mathbf{B}_{i+1,i+1}\mathbf{Q}_{i+1,i+1}^\dagger\mathbf{C}_{i+1,i}, & &i=1,2,3,\dots,I-1.
\end{align}
where $^\dagger$ denotes the pseudo inverse of a rectangular matrix. The system \eqref{eq:tridiag_system} can be solved efficiently using the block LU factorization \cite[Sec.4.5.1]{GoluVanl96} in $O(N^3I)$ operations, assuming the number of unknowns associated with each interface is $O(N)$. The algorithm proceeds by first initializing $\mathbf{f}'_1=\mathbf{f}_1$ and $\tilde{\mathbf{A}}'_{1,1}=\tilde{\mathbf{A}}_{1,1}$, then a \emph{forward sweep} for $i=2$ to $I$,
$$
\begin{aligned}
\tilde{\mathbf{A}}'_{i,i}&= \tilde{\mathbf{A}}_{i,i}-\tilde{\mathbf{A}}_{i,i-1}(\tilde{\mathbf{A}}'_{i-1,i-1})^{-1}\tilde{\mathbf{A}}_{i-1,i},\\
\mathbf{f}'_{i}&= \mathbf{f}_{i}-\tilde{\mathbf{A}}_{i,i-1}(\tilde{\mathbf{A}}'_{i-1,i-1})^{-1}\mathbf{f}'_{i-1},
\end{aligned}
$$
followed by solving $\tilde{\mathbf{A}}'_{I,I}\bs\eta_I=\mathbf{f}'_I$ for $\bs\eta_I$, and then a \emph{backward sweep} for $i=I-1$ down to $1$ to solve for each $\bs\eta_i$,
$$
\bs\eta_i = (\tilde{\mathbf{A}}'_{i,i})^{-1}(\mathbf{f}'_i-\tilde{\mathbf{A}}_{i,i+1}\bs\eta_{i+1}).
$$

Once the densities $\bs\eta_i$ are obtained, the modified proxy strengths $\tilde{\mathbf{c}}_i$ can be recovered by
\begin{align}
\tilde{\mathbf{c}}_1 &=-\tilde{\mathbf{Q}}_{1,1}^\dagger\tilde{\mathbf{C}}_{1,1}\bs\eta_1,\\
\mathbf{c}_i &=-\tilde{\mathbf{Q}}_{i,i}^\dagger(\tilde{\mathbf{C}}_{i,i-1}\bs\eta_{i-1}+\tilde{\mathbf{C}}_{i,i}\bs\eta_{i}), & i=2,3,\dots,I,\\
\tilde{\mathbf{c}}_{I+1} &=-\tilde{\mathbf{Q}}_{I+1,I+1}^\dagger\tilde{\mathbf{C}}_{I+1,I}\bs\eta_{I}.
\end{align}
Then substituting the densities $\bs\eta_i$ and proxy strengths $\mathbf{c}_i$ into the ansatz \eqref{eq:ansatz} gives the scattered field at any locations in any given layer $\Omega_i$.

\section{Numerical Results}\label{sec:results}

In this section, we present numerical examples of multilayered media scattering using the numerical scheme described in the previous sections. We first investigate the convergence of the solutions and the complexity of the computational time and memory requirements, then we show examples of scattering with many layers. Most computations are performed on a Mac Pro with 3.2 GHz 16-Core Intel Xeon W processors and 192 GB RAM using MATLAB R2022a; the 101-layer example in Fig.~\ref{fig:many_layer_101} is performed on a workstation with 3.1 GHz 36-Core Intel Xeon Gold 6254 processors and 768 GB of RAM. Points on all the surrounding walls are uniformly distributed. The proxy points for each layer are placed on a sphere of radius $1.5$ enclosing the unit cell of the layer. (See Fig.~\ref{fig:periodization}(b)-(c).) 

We use the relative flux error $E_{\text{flux}}$ as an independent measure of accuracy, which is defined as
\begin{equation}
E_{\text{flux}}:= \left|\frac{\sum_{m,n} k_u^{(m,n)}|a_{mn}^u|^2+\sum_{m,n} k_d^{(m,n)}|a_{mn}^d|^2-k_1\cos\phi^{inc}}{k_1\cos\phi^{inc}}\right|.
\end{equation}
This measure is based on the conservation of flux (energy) \cite{cho2019spectrally}.

\paragraph{Convergence of the solutions} The first example considers the transmission problem with a two-layered media, where the interface $\Gamma_1$, parameterized by $g_1(x,y)=h\sin(2\pi x)\cos(2\pi y)$, is either flat ($h=0$) or corrugated ($h=0.2$). The wavenumbers in the top and bottom layers are $k_1=8$ and $k_2=16$ and the incident angle is $\phi^{inc}=\frac{5\pi}{6}$ and $\theta^{inc}=0$. In Fig.~\ref{fig:conv_2layer_flat}, we test the convergence of the scattered field $u$ at the point $(-0.25,-0.25,0.25)$ in the top layer (i.e. the reflected field) and at the point $(-0.25,-0.25,-0.25)$ in the bottom layer (i.e. the transmitted field), and investigate the relative flux error $E_{\text{flux}}$. The convergence against $N$, the number of points per interface, and against $P$, the number of proxy points per layer, are shown, where the analytic solution is used as the reference solution in the flat case and the numerical solution with $N=120^2$ and $P=3120$ is used as the reference solution in the corrugated case. Other parameters, if not specified, are fixed at $N=120^2$, $M_w=M=25^2$, $P=3120$. Quadrature corrections of $7^\text{th}$ order are used.
\begin{figure}[t] 
\includegraphics[width=\textwidth]{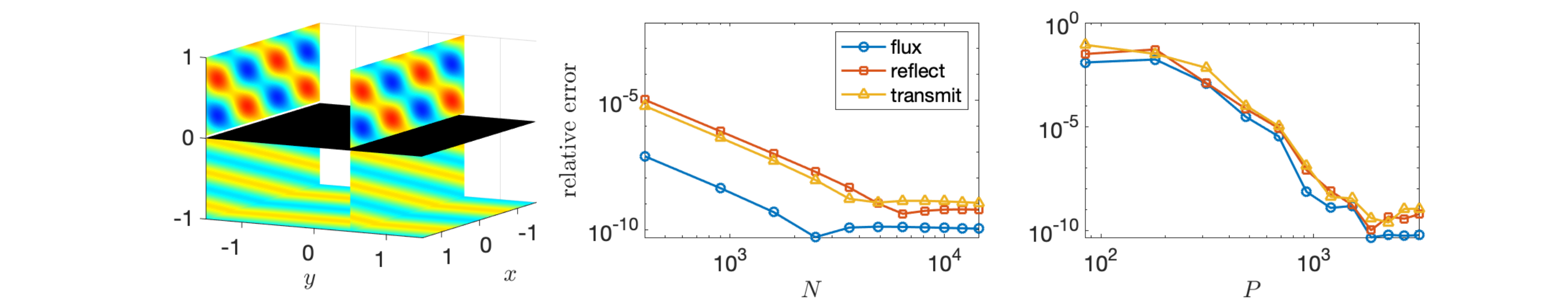}\\
\includegraphics[width=\textwidth]{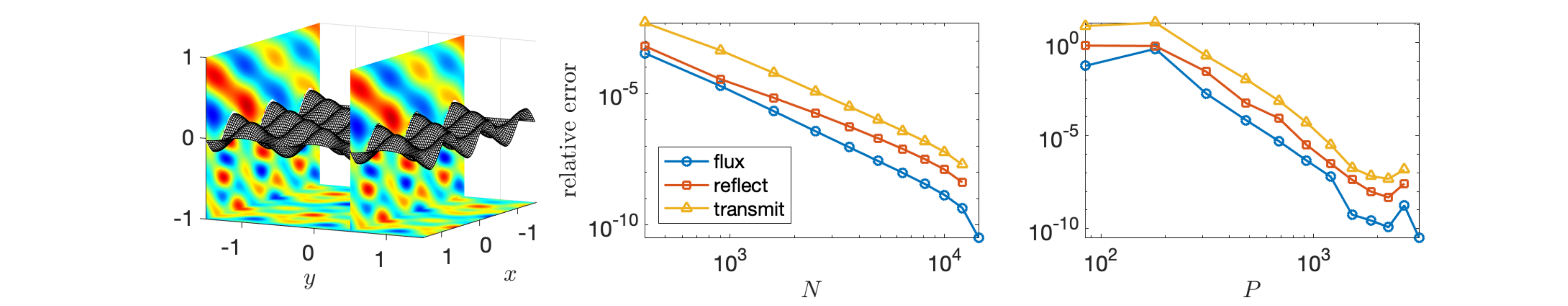}
\caption{Two-layered media transmission problem with wavenumbers $k_1=8$ and $k_2=16.$ The incident angle is $\phi^{inc}=\frac{5\pi}{6}$ with $\theta^{inc}=0$. Results with a flat interface are on the top row, and with a corrugated interface on the bottom row.  Left panels: Total field. Middle panels: relative error of the reflected wave (at the point $(-0.25,-0.25,0.25)$) and of the transmitted wave (at the point $(-0.25,-0.25,-0.25)$) and the flux error against $N$, the number of points per interface. Right panels: convergence and flux error against $P$, the number of proxy points per layer.}\label{fig:conv_2layer_flat}
\end{figure}

In Fig.~\ref{fig:conv_rough_ord}, we perform additional convergence study similar to Fig.~\ref{fig:conv_2layer_flat} by changing the level of roughness of the interface and the order of quadrature correction. As expected, we observe that a higher order method ($7^\text{th}$ order) converges faster and provides more digits of accuracy than a lower order method ($5^\text{th}$ order) at the same number of discretization points. When the interface is a highly corrugated ($h=0.5$), more points (larger $N$) are required than a less corrugated interface ($h=0.2$) to reach the same level of accuracy.
\begin{figure}[htbp] 
\includegraphics[width=\textwidth]{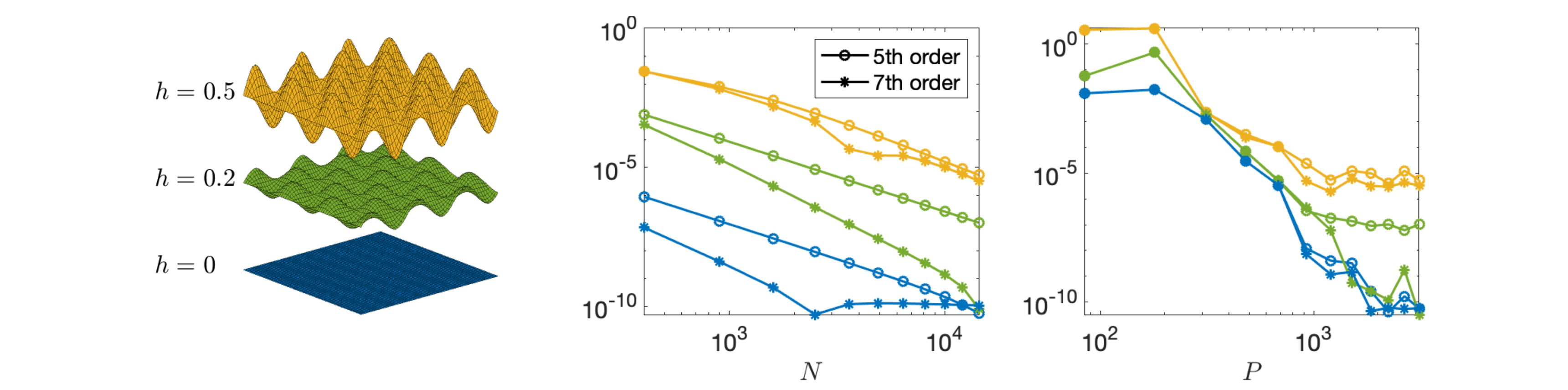}
\caption{Investigation of the effects of different levels of roughness of the interface and the order of the quadrature correction on the convergence of the two-layered media problem in Fig.~\ref{fig:conv_2layer_flat}. Left panel illustrates the surface $g_1(x,y)=h\sin(2\pi x)\cos(2\pi y)$ for three different values of $h$. (Note: this is \emph{not} the actual arrangement of intefaces.) Convergence of flux error against $N$ (middle panel) and against $P$ (right panel) are shown for the cases $h=0$ (blue), $h=0.2$ (green) and $h=0.5$ (yellow), with quadrature corrections of $5^\text{th}$ order (circles) and of $7^\text{th}$ order (asterisks). All other parameters are the same as in Fig.~\ref{fig:conv_2layer_flat}}\label{fig:conv_rough_ord}
\end{figure}

\paragraph{Computational time and memory requirement} With the sparse system \eqref{eq:tridiag_system}, the required computational time and memory are expected to scale \emph{linearly} against the number of interfaces $I$. In Table \ref{table:scaling}, we show the computational results for media with $1$ to $30$ interfaces, where $T_\text{pre}$ is the time for precomputing the quadrature correction weights, $T_\text{fill}$ the time and $M_\text{fill}$ the memory required for filling the matrices in the system (\ref{mat_eq:continuity}-\ref{mat_eq:radiation}), and $T_\text{solve}$ the total time for both the reduction to and the solution of the block tridiagonal system \eqref{eq:tridiag_system} via block LU factorization. In all cases, the interfaces $\Gamma_i$ are parameterized by $g_i(x,y)=0.2\sin(2\pi x)\cos(2\pi y)-(i-1)$ and discretized using $N=60^2$ points each and with $7^\text{th}$-order quadrature. The wavenumbers $k_i$ alternate between $10$ and $20$. Other parameters are fixed at $\phi^{inc}=5\pi/6$, $\theta^{inc}=\pi/4$, $P=2380$, $M_w=M=20^2$ and $K=10$. The relative flux error stays below $5\times10^{-6}$ for any number of layers presented.
\begin{table}[htbp]
\centering
\begin{tabular}{l|lllll}
$I$ & $T_\text{pre}$ (s) & $T_\text{fill}$ (s) & $T_\text{solve}$ (s) & $M_\text{fill}$  (GB) & $E_\text{flux}$ \\ 
\hline
1 & 4 & 23 & 16 & 2.0 & 8.5e-08 \\ 
2 & 8 & 65 & 43 & 5.2 & 3.1e-06 \\ 
3 & 13 & 107 & 73 & 8.4 & 2.9e-06 \\ 
4 & 17 & 149 & 103 & 11.7 & 4.8e-06 \\ 
5 & 21 & 191 & 133 & 14.9 & 4.6e-06 \\ 
10 & 42 & 390 & 280 & 31.0 & 4.9e-06 \\ 
20 & 84 & 797 & 575 & 63.3 & 4.9e-06 \\ 
30 & 125 & 1203 & 875 & 95.6 & 4.9e-06
\end{tabular}
\caption{Time (in seconds) and memory (in gigabytes)  requirements for multilayered media scattering, where $I$ is the number of interfaces.}\label{table:scaling}
\end{table}

\paragraph{Example with 101 layers} To demonstrate the capability of the algorithm, Fig.~\ref{fig:many_layer_101} shows the total field across a 101-layered media (cross-section in the $xz$-plane). The wavenumbers $k_i$ are randomly chosen between $8$ and $20$ for each layer. A relative flux error of $7.4\times10^{-6}$ is achieved with the parameters $N=60^2$, $P=2380$, $M_w=M=20^2$, $K=10$, and with $7^\text{th}$ order quadrature corrections. The total number of unknowns is about 961.3k. 
The computation is completed in about $1.83$ hours (0.2 hours for precomputing the quadrature correction weights, 1.28 hours for filling the matrices and 0.35 hours for solving the system) and used about 321 GB of memory.
\begin{figure}[htbp] 
\centering
\includegraphics[width=\textwidth]{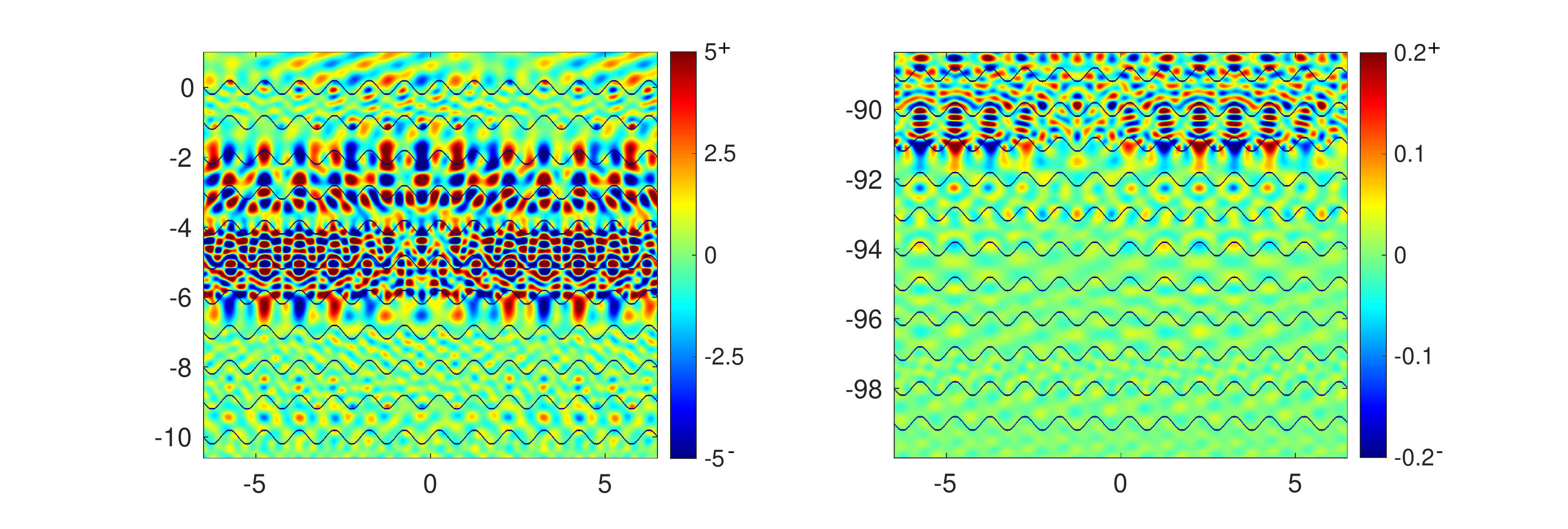}
\caption{101-layered media scattering. The left panel shows the real part of the total field in the top 12 layers and the right panel in the bottom 12 layers (note the different color scales). The wavenumbers for each layer are chosen randomly in $[8,20]$. $N=60^2$ points per interface and $P=2380$ proxy points per layer are used. Computation is done in $1.83$ hours and requires 321 GB of RAM. Relative flux error $\sim7.4\times10^{-5}$.
}\label{fig:many_layer_101}
\end{figure}

\paragraph{Transmission and reflection spectra} Finally, we compute the transmission and reflection spectra, for a range of incident angles $\pi/2<\phi^{inc}<3\pi/2$ and fixed $\theta^{inc}=0$, for an 11-layered media with flat or corrugated interfaces. $\Gamma_i$ are parameterized by $g_i(x,y)=h\sin(2\pi x)\cos(2\pi y)-(i-1)$ for $1\leq i\leq10$. In Fig.~\ref{fig:spectra}, we show the spectra for $h=0$ (flat interfaces) and $h=0.2$ (corrugated interfaces). The wavenumbers $k_i$ alternate between $2\pi$ and $4\pi$ (i.e., alternating between $1$ and $2$ wavelengths across layers). We used $N=40^2$ points for each interface and $P=1740$ proxy points for each layer, so that the flux error is below $10^{-4}$ in all cases for all incident angles. We observe that when the interfaces are corrugated, the spectra vary faster with the incident angle than when the interfaces are flat. The computations are accelerated by precomputing and storing the matrix components that are independent of the incident angle or the Bloch phases, at the cost of higher memory requirements. This gives an over 4x speedup and the total computational time for each structure takes about 1 hour.
\begin{figure}[htbp] 
\includegraphics[width=\textwidth]{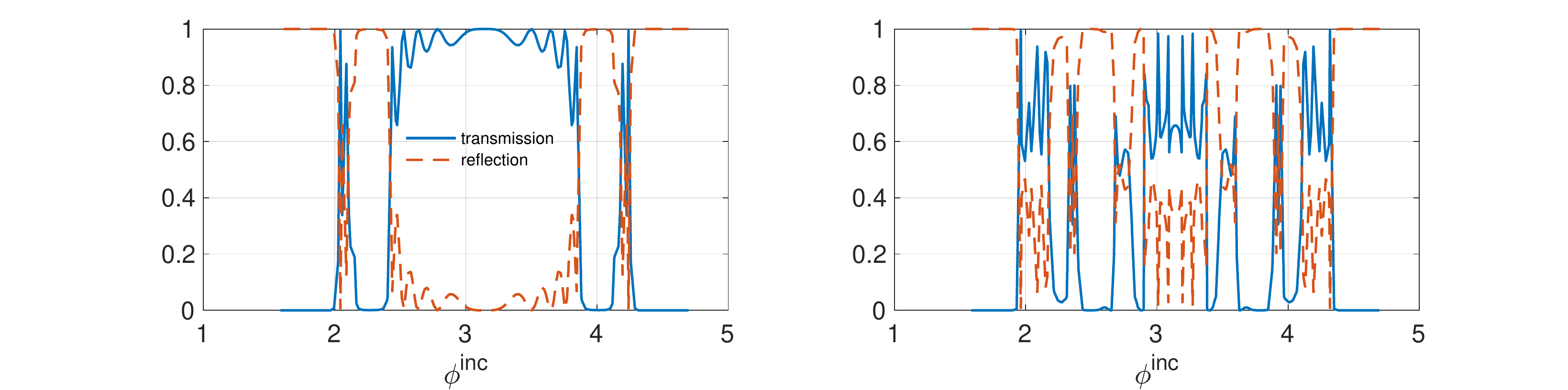}
\caption{The transmission and reflection spectra of an 11-layered media, where the wavenumbers $k_i$ of each layer alternates between $2\pi$ and $4\pi$. Left panel: spectra for $h=0$ (flat interfaces). Right panel: spectra for $h=0.2$ (corrugated interfaces).}\label{fig:spectra}
\end{figure}

\section{Conclusion}\label{sec:conclusion}
A new 3-D multilayered media solver for doubly-periodic geometry is presented. The solver is fast, accurate, and robust, equipped with new high-order specialized quadrature. An accuracy of $5$ to $10$ digits can be obtained for structures with multiple layers with a small number of unknowns for each interface. The solver is capable of handling a structure with 101 layers in under 2 hours. The computational time and the memory requirement scale linearly with the number of layers in the structure. The transmission and reflection spectra for media with many layers can be computed efficiently, which is useful for applications in science and engineering. Future directions include further accelerating the solver by developing fast direct solvers in the style of \cite{zhang2022fast} and developing new solvers for Maxwell's equations.

\section*{Acknowledgments}
B. Wu thanks Per-Gunnar Martinsson for generously allowing use of his workstation for the 101-layered media example. M.H. Cho and B. Wu thank Alex Barnett for his help to identify the collaboration for this work. The work of M.H. Cho is supported by NSF grant DMS 2012382

\bibliography{sample}
\bibliographystyle{abbrv}

\end{document}